\newtheorem{Thm}{Theorem}
\newtheorem{Prop}[Thm]{Proposition}
\newtheorem{Cor}[Thm]{Corollary}
\theoremstyle{definition}
\newtheorem{Not}[Thm]{Remark}
\renewcommand{\r}{|}
\newcommand{\R}{\mathbb{R}}
\newcommand{\Z}{\mathbb{Z}}
\newcommand{\id}{\operatorname{id}}
\newcommand{\ra}{\rightarrow}
\newcommand{\act}{\curvearrowright}
\newcommand{\Deck}{\mathrm{Deck}}
\newcommand{\interior}{\mathrm{int}}
\newcommand{\Fix}{\mathcal{G}}
\newcommand{\Sym}{\mathrm{Sym}}
\begin{document}

\title[Local monodromy of branched covers]{Local monodromy of branched covers and dimension of the branch set}

\author{Martina Aaltonen}
\address{Department of Mathematics and Statistics, P.O. Box 68 (Gustaf H\"allstr\"omin katu 2b), FI-00014 University of Helsinki, Finland}
\email{martina.aaltonen@helsinki.fi}

\author{Pekka Pankka}
\address{Department of Mathematics and Statistics, P.O. Box 35, FI-40014 University of Jyv\"askyl\"a, Finland}
\email{pekka.pankka@jyu.fi}

\thanks{M.A. has been supported by the Emil Aaltonen foundation and P.P. partly supported by the Academy of Finland project 256228.}  

\subjclass[2010]{57M12 (57M30, 30C65)}

\date{\today}

\begin{abstract}
We show that, if the local dimension of the branch set of a discrete and open mapping $f\colon M\to N$ between $n$-manifolds is less than $(n-2)$ at a point $y$ of the image of the branch set $fB_f$, then the local monodromy of $f$ at $y$ is perfect.
In particular, for generalized branched covers between $n$-manifolds the dimension of $fB_f$ is exactly $(n-2)$ at the points of abelian local monodromy. As an application, we show that a generalized branched covering $f\colon M \to N$ of local multiplicity at most three between $n$-manifolds is either a covering or $fB_f$ has local dimension $(n-2)$. 
\end{abstract}

\maketitle

\section{Introduction}

A continuous mapping $f\colon X\to Y$ between topological spaces is a \emph{(generalized) branched cover} if $f$ is discrete and open, that is, pre-image $f^{-1}(y)$ of a point $y\in Y$ is a discrete set and $f$ maps open sets to open sets. The name branched cover for these maps stems from the Chernavskii--V\"ais\"al\"a theorem \cite{Chernavski, Vaisala}: \emph{the branch set of a branched cover between (generalized) manifolds has codimension at least two}. It is an easy consequence of the Chernavskii--V\"ais\"al\"a theorem that branched covers between (generalized) manifolds are, at least locally, completions of covering maps.

We follow here the typical naming convention in this context and say that a point $x\in X$ is a \emph{branch point of $f$} if $f$ is not a local homeomorphism at $x$. The \emph{branch set of the mapping $f$}, i.e.\;the set of branch points of $f$, is denoted $B_f$. Note that, in the context of PL topology, $B_f$ is called the singular set and its image $fB_f$ the branch set.

Branch sets of branched covers between surfaces are well-understood. By the classical Stoilow theorem (see e.g.\;\cite{Whyburn-book}), the branch set of a branched cover between surfaces is a discrete set. In higher dimensions, branch sets of PL branched covers between manifolds are subcomplexes of codimension at least two.
More general branched covers may, however, exhibit also wilder branching behavior. Heinonen and Rickman constructed in \cite{Heinonen-Rickman-Topology} and \cite{Heinonen-Rickman-Duke} quasiregular, even BLD, branched covers $S^3\to S^3$ which contain wild Cantor sets in their branch sets; see also \cite{Pankka-Rajala-Wu}. In fact, in dimensions $n\ge 3$, branch sets of branched covers are not understood in a similar precise fashion as in two dimensions. In particular a conjecture of Church and Hemmingsen \cite{Church-Hemmingsen} is still open: \emph{The branch set of a branched cover between $3$-manifolds has topological dimension one}. 

It is easy to observe that the conjecture of Church and Hemmingsen is equivalent to the question whether there exists a branched cover between $3$-manifolds for which $fB_f$ is a wild Cantor set in a neighborhood of a point in $fB_f$; see also Church \cite{Church} and Montesinos \cite{Montesinos} for related questions. Note that, we have $\dim B_f = \dim f^{-1}B_f = \dim fB_f$ for branched covers $f\colon M\to N$ between manifolds by \cite[Corollary 2.3]{Church-Hemmingsen}.

In this article we consider the connection of the monodromy to the local dimension of the branch set. This question is interesting already in the context of PL branched covers as the following example shows.

Let $f \colon S^3 \to H^3$ be a normal covering of the Poincar\'e homology sphere $H^3$ and $F=\Sigma^2 f \colon \Sigma^2 S^3\to \Sigma^2 H^3$ the double suspension of $f$. Then $F$ is a normal branched cover for which $B_F$ is a circle in $S^5=\Sigma^2 S^3$ and $FB_F$ in $\Sigma^2 H^3 \cong S^5$ is a wild knot; see Edwards \cite{Edwards} and Cannon \cite{Cannon}. In particular, $B_F$ and $FB_F$ both have codimension $4$. The monodromy group $G_F$ of $F$ is isomorphic to the fundamental group of $H^3$ which is a perfect group. Recall, that a group $\Gamma$ is \emph{perfect} if $\Gamma/[\Gamma,\Gamma]$ is a trivial group. Our main result shows that this is a general phenomenon: \emph{If the branch set of a branched cover has codimension larger than two, then the local monodromy groups of the map are perfect}.

We define the local monodromy of a branched cover as follows. Let $f\colon M\to N$ be a proper branched cover between $n$-manifolds. By a result of Berstein and Edmonds \cite{Berstein-Edmonds} (see also \cite{Aaltonen}), there exists a space $X_f$ and an action $G_f \act X_f$ of the monodromy group $G_f$ of $f$ by homeomorphisms for which the diagram
\begin{equation}
\label{eq:triangle}
\xymatrix{
& X_f \ar@(r,u)_{G_f} \ar[dl] \ar[dr]^{\bar f}& \\
M \ar[rr]^f & & N }
\end{equation}
commutes, where the maps $X_f \to M$ and $\bar f \colon X_f \to N$ are normal branched covers. Recall that, similarly as for covering maps, a branched cover $h \colon X\to Y$ is \emph{normal} if $h$ is a quotient map of the action of the deck group $\Deck(h)$ to $X$.

We call the map $\bar f\colon X_f \to N$, which is the orbit map of the action $G_f \act X_f$, the \emph{normalization of $f$}; in particular, $\Deck(\bar f) = G_f$. Given $y\in N$, the stabilizer subgroups of $G_f$ of points in $\bar f^{-1}(y)$ are conjugate to each other and we define the \emph{local monodromy $\Fix_f(y)$ of $f$ at $y\in N$} to be the conjugacy class of these subgroups.

Our main theorem reads as follows.
\begin{Thm}
\label{thm:CH_perfect}
Let $f\colon M\to N$ be a proper branched cover between $n$-manifolds. If the local dimension of $fB_f$ at $y\in fB_f$ is less than $n-2$, then $\Fix_f(y)$ is a finite perfect group. 
\end{Thm}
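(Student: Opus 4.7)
The strategy is to realize $\Fix_f(y)$ as the deck group of an unbranched covering over a small punctured neighborhood of $y$ and then show that the first integral homology of that punctured neighborhood vanishes via Alexander duality. To set things up, I first fix a preimage $\tilde y \in \bar f^{-1}(y)$ and let $K \leq G_f$ denote its stabilizer, a representative of $\Fix_f(y)$. Because $\bar f^{-1}(y)$ is discrete, I may choose a connected open neighborhood $U$ of $y$ in $N$, homeomorphic to $\R^n$ via a chart, so small that the component $\tilde V$ of $\bar f^{-1}(U)$ containing $\tilde y$ meets $\bar f^{-1}(y)$ only at $\tilde y$. The stabilizer of $\tilde V$ in $G_f$ then coincides with $K$, and $\bar f|_{\tilde V} \colon \tilde V \to U$ is a normal branched cover with deck group $K$. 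Finiteness of $K$ is immediate from properness of $f$: its fibers are compact and discrete, hence finite, so $G_f$ embeds in a finite symmetric group on a generic fiber. Using the local dimension hypothesis, I further shrink $U$ to ensure $\dim(U \cap fB_f) \leq n-3$; in particular, $n \geq 3$.

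Setting $U^{*} := U \setminus fB_f$ and $\tilde V^{*} := \tilde V \setminus \bar f^{-1}(fB_f)$, the restriction $\bar f|_{\tilde V^{*}} \colon \tilde V^{*} \to U^{*}$ is an honest unbranched normal cover with deck group $K$. By covering space theory, $K$ is a quotient of $\pi_1(U^{*}, x^{*})$, so the abelianization $K^{\mathrm{ab}}$ is a quotient of $H_1(U^{*}; \Z)$, and it suffices to establish $H_1(U^{*}; \Z) = 0$. Under the chart, $U \cap fB_f$ is a closed subset of $\R^n$ of topological dimension at most $n-3$. For any compact $A \subset \R^n$ of dimension $\leq n-3$, the Hurewicz--Wallman dimension theorem gives $\check H^{n-2}(A; \Z) = 0$; Alexander duality in $S^n$ then yields $\tilde H_1(S^n \setminus A; \Z) \cong \check H^{n-2}(A; \Z) = 0$, and a Mayer--Vietoris splitting $S^n \setminus A = (\R^n \setminus A) \cup V_\infty$ (with $V_\infty$ a punctured neighborhood of $\infty$ disjoint from $A$) gives $H_1(\R^n \setminus A; \Z) = 0$, using $H_1(S^{n-1}) = 0$ for $n \geq 3$. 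Applying this to suitable compact cores $A$ of $U \cap fB_f$ and exploiting that every singular $1$-cycle in $U^{*}$ has compact support should yield $H_1(U^{*}; \Z) = 0$, whence $K^{\mathrm{ab}} = 0$ and $K$ is perfect.

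The main obstacle lies precisely in the passage from compact cores back to $U \cap fB_f$ itself. Alexander duality in $S^n$ handles only compact inputs, whereas $U \cap fB_f$ is merely closed in $U$ and could in principle accumulate on $\partial U$. The usual device of cutting along a ``generic'' sphere around $y$ to produce a compact branch image is not obviously available, since a set of merely topological dimension $\leq n-3$ need not have small radial image in a chart. Resolving this calls for either a careful limit along a nested neighborhood basis of $y$ (in which each singular cycle is matched with a compact core it avoids), or a shape-theoretic version of Alexander duality for closed, possibly noncompact subsets. Once this is in hand, the chain of surjections $H_1(U^{*}; \Z) \twoheadrightarrow K^{\mathrm{ab}}$ forces $K$ to be perfect, completing the proof.
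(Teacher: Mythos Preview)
Your overall strategy coincides with the paper's: realize a representative $K$ of $\Fix_f(y)$ as the deck group of a normal branched cover over a small Euclidean neighborhood $U$ of $y$, observe that $K$ is a quotient of $\pi_1(U\setminus fB_f)$, and then use Alexander duality together with the dimension bound $\dim(fB_f\cap U)\le n-3$ to force $H_1(U\setminus fB_f;\Z)=0$, whence $K^{\mathrm{ab}}=0$. The paper packages this in two stages---first proving that abelian local monodromy forces local dimension $n-2$ (Theorem~\ref{thm:CH}), then reducing the general statement to this by passing to the quotient $W/[K,K]$---but that is logically equivalent to your one-step route through $K^{\mathrm{ab}}$, and arguably your phrasing is the more direct of the two.

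The gap you flag in your last paragraph is genuine, and the paper does not fill it by either of the fixes you propose. Instead it sidesteps the issue with a doubling construction. One first passes to a \emph{closed} $n$-cell $\bar D\subset U$ containing $y$, so that $fB_f\cap\bar D$ is compact, and takes the $\tilde y$-component $E$ of $\bar f^{-1}(\bar D)$; the restriction $\bar f|_E\colon E\to\bar D$ is a proper normal branched cover with deck group $K$. One then glues two copies of this map along the boundary spheres to obtain a normal branched cover $g\colon Z\to S^n$ with $\Deck(g)\cong K$ and with $gB_g$ now a \emph{compact} subset of $S^n$ of dimension at most $n-3$. Ordinary Alexander duality in $S^n$ applies without qualification:
\[
H_1(S^n\setminus gB_g;\Z)\;\cong\;\check H^{n-2}(gB_g;\Z)\;=\;0,
\]
so $\pi_1(S^n\setminus gB_g)$ is perfect, and its quotient $K$ is perfect as well. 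This is cleaner than chasing compact cores or invoking a noncompact duality theorem, and it also dispenses with any need to find a sphere around $y$ transverse to $fB_f$. If you replace your third paragraph with this doubling step, the remainder of your argument goes through verbatim.
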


As an immediate corollary of Theorem \ref{thm:CH_perfect} and the Chernavskii--V\"ais\"al\"a theorem on the dimension of the branch set, we obtain an elementary proof for the following well-known result; for a proof using the Smith theory, see e.g.\;\cite{Borel-book}.

\begin{Cor}
Let $f\colon M\to N$ be a proper normal branched cover between $n$-manifolds having abelian deck group. Then either $f$ is a covering map or $\dim B_f = \dim fB_f = n-2$. 
\end{Cor}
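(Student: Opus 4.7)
The plan is to combine Theorem \ref{thm:CH_perfect} with the elementary fact that an abelian perfect group must be trivial, together with the standard correspondence between local monodromy and local branching for normal branched covers.

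Since $f$ is already normal, $f$ coincides with its own normalization, so $G_f = \Deck(f)$ is abelian and every local monodromy group $\Fix_f(y)$, being (a conjugacy class of) a subgroup of $G_f$, is itself abelian. An abelian perfect group $\Gamma$ satisfies $\Gamma = \Gamma/[\Gamma,\Gamma] = \{e\}$, so whenever Theorem \ref{thm:CH_perfect} forces $\Fix_f(y)$ to be perfect, it is in fact trivial.

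The next step I would take is to verify the equivalence: for $y \in N$, $\Fix_f(y)$ is trivial if and only if $y \notin fB_f$. The ``if'' direction is immediate, since then no preimage of $y$ is a branch point and the deck group acts freely near $f^{-1}(y)$. The ``only if'' direction uses the local model: near each preimage $x$ of $y$, the normal branched cover $f$ is the quotient map of $\Fix_f(y)$ acting on a neighborhood of $x$, and a trivial stabilizer forces this map to be a local homeomorphism at $x$, whence $y \notin fB_f$.

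Putting these pieces together, fix $y \in fB_f$. Then $\Fix_f(y)$ is nontrivial and abelian, hence not perfect, so the contrapositive of Theorem \ref{thm:CH_perfect} says the local dimension of $fB_f$ at $y$ is at least $n-2$. The Chernavskii--V\"ais\"al\"a theorem supplies the matching upper bound, so the local dimension equals $n-2$ at every point of $fB_f$. Hence either $fB_f = \varnothing$ and $f$ is a covering, or $\dim fB_f = n-2$; the equality $\dim B_f = \dim fB_f$ then follows from \cite[Corollary 2.3]{Church-Hemmingsen} as already recalled in the introduction. I expect no serious obstacle; the only subtlety is the local-model argument in the monodromy--branching equivalence, which is part of the structural background already invoked when defining $\Fix_f(y)$.
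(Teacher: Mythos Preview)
Your proposal is correct and is exactly the argument the paper intends: the corollary is stated there as an immediate consequence of Theorem~\ref{thm:CH_perfect} together with the Chernavskii--V\"ais\"al\"a bound, and you have simply spelled out the two obvious steps (abelian $\Rightarrow$ not nontrivially perfect, and nontrivial stabilizer $\Leftrightarrow$ branch point for a normal cover). The only remark is that the paper also proves the abelian case separately as Theorem~\ref{thm:CH}, so one could equally well cite that directly; your route through Theorem~\ref{thm:CH_perfect} is the one the introduction advertises.
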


As an application of Theorem \ref{thm:CH_perfect}, we also obtain a positive result in a special case of the conjecture of Church and Hemmingsen for branched covers having local multiplicity at most three. More precisely we have the following result.

\begin{Thm}
\label{thm:CHA}
Let $f\colon M \to N$ be a proper branched cover between $n$-manifolds so that the local multiplicity of $f$ is at most three in $B_f$. Then either $f$ is a covering map or $fB_f$ has local dimension $n-2$. 
\end{Thm}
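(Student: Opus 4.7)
The plan is to derive Theorem \ref{thm:CHA} from Theorem \ref{thm:CH_perfect} via a short group-theoretic observation. Assume $f$ is not a covering, so $B_f\neq\es$, and let $y\in fB_f$. If the local dimension of $fB_f$ at $y$ were strictly less than $n-2$, then Theorem \ref{thm:CH_perfect} would give that the local monodromy $\Fix_f(y)$ is a finite perfect group. I show that this is incompatible with the assumption on local multiplicity.

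Pick a branch point $x\in f^{-1}(y)\cap B_f$; by hypothesis its local index satisfies $i(x,f)\in\{2,3\}$. Using the normalization \eqref{eq:triangle}, write $M=X_f/H$ where $H\leq G_f$ is the deck group of the normal branched cover $p\colon X_f\to M$, and lift $x$ to a point $\tilde x\in\bar f^{-1}(y)$ with $p(\tilde x)=x$. Set $K:=\mathrm{Stab}_{G_f}(\tilde x)$; this is a representative of the conjugacy class $\Fix_f(y)$ and is therefore finite and perfect. The stabilizer of $\tilde x$ under the $H$-action on $X_f$ is $K\cap H$, so the normal branched covers $\bar f$ and $p$ have local indices $|K|$ and $|K\cap H|$ at $\tilde x$ respectively. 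Multiplicativity of the local index under the composition $\bar f = f\circ p$ then yields
$$
i(x,f) \;=\; \frac{i(\tilde x,\bar f)}{i(\tilde x,p)} \;=\; [K:K\cap H]\in\{2,3\}.
$$

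To conclude, I invoke the elementary fact that a perfect group admits no proper subgroup of index $2$ or $3$: a subgroup of index $2$ is automatically normal and produces a $\Z/2$ quotient, while a subgroup of index $3$ yields, via the action on its three cosets, a homomorphism $K\to S_3$ whose nontrivial image (of order $3$ or $6$) has a nontrivial abelian quotient. Either possibility contradicts $K=[K,K]$, finishing the proof. The only nonroutine step is the identification $i(x,f)=[K:K\cap H]$ through the normalization; once the diagram \eqref{eq:triangle} is in place, this is a standard consequence of multiplicativity of local degrees for branched covers and is really the only piece requiring care in writing out.
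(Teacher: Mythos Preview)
Your argument is correct and takes a genuinely different route from the paper's proof. The paper does not invoke Theorem \ref{thm:CH_perfect} at all; instead it works with the abelian special case (Theorem \ref{thm:CH}) and performs a case analysis. When the local multiplicity is $2$ the local monodromy is $\Z_2$; when it is $3$ one localizes, observes that the monodromy group of the restricted map is a transitive subgroup of $S_3$, and splits into the cases $\Z_3$ and $S_3$. In the $S_3$ case the paper passes to the auxiliary two-fold normal cover $p\colon U_g\to U$, applies Theorem \ref{thm:CH} to $p$, and then uses the Church--Hemmingsen dimension equality to push the conclusion back to $fB_f$.

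Your approach short-circuits all of this: from Theorem \ref{thm:CH_perfect} the stabilizer $K$ is perfect, and the identity $i(x,f)=[K:K\cap H]$ together with the elementary fact that perfect groups admit no proper subgroups of index $\le 3$ finishes the argument in one stroke. This is cleaner and more conceptual, and in fact shows immediately how far the method goes: the same reasoning rules out local multiplicity $4$ as well (no perfect subgroup of $S_4$ acts transitively), so the first obstruction is multiplicity $5$ with $K=A_5$. The paper's approach, by contrast, only uses the weaker abelian result and is more explicit about the local monodromy structure, which may be viewed as an advantage if one wants to avoid appealing to the full perfectness theorem. Your flagged step $i(x,f)=[K:K\cap H]$ is indeed the only point needing care; it follows, as you say, from multiplicativity of local degree together with the fact that for the orbit map of a finite group action the local degree at a point equals the order of its stabilizer (this is implicit in the paper's identification $\Fix_{\bar f}(z)\cong\Deck(\bar f|_W)$ in the proof of Theorem \ref{thm:CH_perfect_B}).
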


A remark on the relation of these results to the classical Smith theory is in order. For normal branched covers $f\colon M\to N$ between (cohomology) manifolds, the branch set $B_f$ has decomposition into finitely many cohomology manifolds of codimension at least $2$ (see e.g.\;\cite[Theorem V.2.2]{Borel-book}). Since $B_f$ is not a (cohomology) $0$-manifold, it is therefore easy to conclude that $fB_f$ has local dimension at least $1$ at each point. It is not known to us to which extent these methods are available in the context of Theorem \ref{thm:CH_perfect}, since the space $X_f$ is not \emph{a priori} a cohomology $n$-manifold.

We finish this introduction with a non-existence result for branched covers branching over an Antoine's necklace. A branched cover $f \colon X\to Y$ is \emph{locally normal} if each point $x\in X$ has a neighborhood $U$ for which $f|_U \colon U \to fU$ is a normal branched cover.


\begin{Thm}
\label{thm:Antoine}
There are no locally normal branched covers $S^3\to S^3$ for which the image of the branch set is an Antoine's necklace.
\end{Thm}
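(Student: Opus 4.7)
The plan is to argue by contradiction. Suppose $f \colon S^3 \to S^3$ is a locally normal branched cover whose image of the branch set $fB_f$ equals an Antoine's necklace $A$. Since $A$ is a Cantor set, $\dim A = 0$, so at every $y \in A$ the local dimension of $fB_f$ is $0 < 1 = n-2$. Theorem \ref{thm:CH_perfect} then yields that $\Fix_f(y)$ is a finite perfect group for every $y \in fB_f$, and it is nontrivial since $y$ is in the image of the branch set. I would also record that $\dim B_f = \dim fB_f = 0$ by \cite[Corollary 2.3]{Church-Hemmingsen}.

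Next I would pass from the abstract local monodromy to an honest group action on a neighborhood. Fix $x \in B_f$ and $y = f(x)$; by local normality there is a neighborhood $U$ of $x$ on which $f|_U \colon U \to fU$ is a normal branched cover with deck group $G$. The stabilizer $G_x \le G$ of $x$ then represents the local monodromy class, so $G_x$ is isomorphic to $\Fix_f(y)$ and is a nontrivial finite perfect group. Because every finite $p$-group is nilpotent and hence has nontrivial abelianization, no nontrivial finite $p$-group is perfect; therefore $|G_x|$ is divisible by some odd prime $p$, and I would pick $\sigma \in G_x$ of order $p$.

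The core of the argument is now Smith theory. The element $\sigma$ is a homeomorphism of $U \subset S^3$ of odd prime order $p$ fixing $x$, and $\sigma$ is orientation-preserving because its action on local orientation has order dividing the odd integer $p$. By Smith theory for continuous $\Z/p$-actions on $3$-manifolds (see e.g.\;\cite{Borel-book}), the fixed set $\F(\sigma)$ is, near $x$, a mod-$p$ cohomology manifold of even codimension; as the action is nontrivial this codimension equals $2$, so $\F(\sigma)$ has topological dimension $1$ in every neighborhood of $x$. On the other hand, since $\sigma \neq e$ is a deck transformation of the normal cover $f|_U$, every point fixed by $\sigma$ has nontrivial stabilizer in $G$, which gives $\F(\sigma) \cap U \subset B_f \cap U$. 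This produces a $1$-dimensional subset of $B_f$ at $x$, contradicting $\dim B_f = 0$.

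I expect the main obstacle to be the clean invocation of the topological dimension statement for $\F(\sigma)$. Smith theory readily yields the mod-$p$ cohomological codimension $2$ conclusion, but transferring this to topological (covering) dimension requires either passing through the fact that $\F(\sigma)$ is a mod-$p$ cohomology submanifold of codimension $2$ in a $3$-manifold (and hence has covering dimension at least $1$) or invoking a structural result on finite topological group actions on $3$-manifolds that makes the action locally standard. Either route is standard but must be cited with some care, since the Smith-theoretic dimension statements are more commonly formulated in the PL or smooth category.
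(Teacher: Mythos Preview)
Your approach is correct but takes a genuinely different route from the paper's. The paper never invokes Smith theory for Theorem~\ref{thm:Antoine}; it instead exploits the specific \emph{toroidal} geometry of Antoine's necklace. In Section~\ref{sec:applications} the paper proves the more general Corollary~\ref{cor:CH2}: if $fB_f$ lies in a toroidal Cantor set, then near any $y\in fB_f$ one finds a neighborhood $T$ with $\partial T\approx S^1\times S^1$, and the $x$-component $H_x$ of $f^{-1}T$ has connected boundary (Alexander duality inside a small cell). Proposition~\ref{prop:CH_cor} then shows that $\Deck(f|_{H_x})$ injects into $\Deck(f|_{\partial H_x})$, and since $\partial H_x\to\partial T$ is a connected cover of a torus, this group is abelian. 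Theorem~\ref{thm:CH} (the abelian case) now forces $\dim fB_f=1$, a contradiction. This argument is entirely elementary and in line with the paper's stated aim of avoiding Smith theory.

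Your argument, by contrast, uses nothing about Antoine's necklace beyond $\dim A=0$, so it would actually rule out \emph{any} Cantor set as $fB_f$ for a locally normal branched cover between $3$-manifolds---a stronger conclusion, at the cost of the Smith-theoretic input you flag. Two small remarks. First, the identification $G_x\cong\Fix_f(y)$ (with the \emph{global} local monodromy) is not immediate from the definitions; you can sidestep it by applying Theorem~\ref{thm:CH_perfect_B} directly to the proper normal branched cover $f|_U$ (with $U$ chosen so that $f|_U^{-1}(y)=\{x\}$), which yields that $\Deck(f|_U)=G_x$ itself is perfect. Second, the detour through perfection to secure an odd prime is unnecessary here: deck transformations of a branched cover between connected orientable manifolds are automatically orientation-preserving (compare local degrees in $f\circ\tau=f$), so even a prime $p=2$ would give an orientation-preserving involution with fixed set of even codimension. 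The paper in fact alludes to exactly this Smith-theoretic shortcut in the introduction (the paragraph citing \cite[Theorem~V.2.2]{Borel-book}) but deliberately develops the elementary toroidal argument instead.
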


\medskip

This article is organized as follows. After discussing preliminaries in Section \ref{sec:pre}, we prove in Section \ref{sec:CH_perfect} a slightly more general version of Theorem \ref{thm:CH_perfect} for branched covers from codimension $2$ manifold completions to manifolds. In Section \ref{sec:CHA} we give a proof of a similar generalization for Theorem \ref{thm:CHA}. Finally, in Section \ref{sec:applications}, we discuss applications to Cantor sets and prove Theorem \ref{thm:Antoine}.

\vspace*{5mm}
\noindent{\bf Acknowledgments}

Authors thank Juan Souto and Jang-Mei Wu for discussions and their comments on the manuscript.


\section{Preliminaries}
\label{sec:pre}

In this section, we recall few basic facts on branched covers and introduce some terminology which we use in the forthcoming sections. Note that we consider only path-connected manifolds and manifold completions. 

\subsection{Coverings}

Let $f \colon X \to Y$ be a covering map between path-connected spaces and $y_0\in Y$. The \emph{monodromy $\mu_f \colon \pi_1(Y,y_0) \to \Sym f^{-1}(y_0)$ of $f$} is the homomorphism which associates  a permutation of $\Sym f^{-1}(y_0)$ to every homotopy class $[\gamma]$. More precisely, let $\gamma \colon [0,1]\to Y$ be a loop based at $y_0$, $x\in f^{-1}(y_0)$, and $\tilde \gamma_x \colon [0,1]\to X$ the lift of $\gamma$ from $x$ in $f$. Then $\mu_f([\gamma])(x) = \tilde \gamma_x(1)$. The \emph{monodromy group $G_f$ of $f$} is the quotient $\pi_1(Y,y_0)/\ker \mu_{f}$. 

For normal coverings $f\colon (X,x_0)\to (Y,y_0)$ of pointed spaces, we have also the \emph{deck homomorphism $\sigma_{f,x_0} \colon \pi_1(Y,y_0) \to \Deck(f)$ of $f$} which associates to each homotopy class $[\gamma]\in \pi_1(Y,y_0)$ a deck transformation using the lift of the representative $\gamma$ from $x_0$, that is, given a loop $\gamma \colon [0,1] \to Y$ at $y_0$, we set $\sigma_{f,x_0}([\gamma])$ to be the (unique) deck transformation $\tau \colon X\to X$ satisfying $\tau(x) = \tilde \gamma_{x_0}(1)$. Note that, $G_f \cong \Deck(f)$, but typically, the deck transformation $\sigma_{f,x_0}([\gamma])$ is not an extension of the permutation $\mu_f([\gamma])$.

\subsection{Manifold completions and proper branched coverings}
\label{sec:BC}

We say that a locally connected (and locally compact) Hausdorff space $X$ is a \emph{codimension $2$ manifold completion}, if there exists a connected $n$-manifold $X^o$ (possibly with boundary) and an embedding $\iota \colon X^o \hookrightarrow X,$ so that $\iota(X^o) \subset X$ is dense and the set $X\setminus \iota(X^o)$ does not locally separate $X$. In particular, $\overline{X^o} = X$. In other words, we have obtained $X$ from $X^o$ by the Fox-completion \cite{Fox}; see also \cite{Montesinos}. 


This class of spaces rises naturally in the context of branched covers. Indeed, if $f\colon X\to M$ is a branched cover from a locally compact and locally connected Hausdorff space $X$ to an $n$-manifold $M$ so that $B_f$ does not locally separate $X$. Then $X$ is a codimension $2$ manifold completion, since $X^o = X\setminus B_f$ is an $n$-manifold. 

Let $f\colon X\to M$ be a proper branched cover from a codimension $2$ manifold completion to a manifold $M$. Then $f$ is a completed cover, that is, $f$ is the unique extension of the covering $f' = f|_{X'} \colon X' \to M'$ with respect to $M'$, where $X' = X\setminus f^{-1}fB_f$ and $M' = M\setminus fB_f$ are open dense subsets of $X$ and $M$, respectively, and the sets $f^{-1}fB_f$ and $fB_f$ do not locally separate $X$ and $M$, respectively. Indeed, since $f$ is proper, $f$ is surjective and both $fB_f$ and $f^{-1}fB_f$ are closed sets. For the general theory of these completions, see e.g.\;Fox \cite{Fox}, Berstein-Edmonds \cite{Berstein-Edmonds}, Edmonds \cite{Edmonds-Michigan}, or \cite{Aaltonen}. We call $f'$ the \emph{regular part of $f$}. 

We recall two facts on proper branched covers. First, $f\colon X\to M$ is a proper normal branched cover if and only if its regular part $f'\colon X'\to M'$ is a proper normal covering; see Edmonds \cite{Edmonds-Michigan}. We also recall that the homomorphism $\Deck(f) \to \Deck(f')$, $g\mapsto g|_{X'}$, is an isomorphism; see Montesinos \cite{Montesinos-old}. 

We define the \emph{monodromy of $f \colon X\to M$} to be the monodromy of its regular part $f' \colon X' \to M'$, that is, 
\[
\mu_{f} := \mu_{f'} \colon \pi_1(M',y_0) \to \Sym f^{-1}(y_0)
\]
for $y_0 \in M'$.

\subsection{Monodromy triangle}

The regular part $f' \colon X' \ra M'$ of a branched covering $f \colon X \ra M$ has, by the classical covering space theory, a monodromy triangle 
\begin{equation*}
\xymatrix{
& \widetilde{M'}/\ker \mu_{f'} \ar[dl]_{p} \ar[dr]^{\bar f'}& \\
X' \ar[rr]^{f'} & & M'}
\end{equation*}
where the normal covering map $\bar f'$ is the orbit map of the natural action of the monodromy group $G_{f'} = \pi_1(M',y_0)/\ker \mu_{f'}$ on $\widetilde M'/\ker \mu_{f'}$ by homeomorphisms. The monodromy triangle \eqref{eq:triangle} of $f \colon X \ra M$ is obtained as an extension of the monodromy triangle of its regular part $f' \colon X' \ra M'.$ We refer to Berstein-Edmonds \cite{Berstein-Edmonds}, or \cite{Aaltonen}, for details.

Note that, given a normalization $\bar f\colon X_f \to M$ of a proper branched cover $f\colon X\to M$ and a subgroup $H \subset G_f$ of the monodromy group $G_f$ of $f$, there exists a factorization
\[
\xymatrix{
X_f \ar[d]_q \ar[dr]^{\bar f} & \\
X_f/H \ar@{-->}[r]_{\bar f_H} & M }
\]
where $q$ and $\bar f_H$ are branched covers induced by the action $H\act X_f$. Moreover, if $H$ is normal in $G_f$, then $\bar f_H$ is a normal branched cover with $\Deck(\bar f_H) \cong \Deck(\bar f)/H$. 

\subsection{Regular neighborhoods}

For the localization arguments we recall the existence of normal neighborhoods (V\"ais\"al\"a \cite[Lemma 5.1]{Vaisala}): \emph{Let $f\colon X\to M$ be a branched cover from a codimension $2$ manifold completion to a manifold. Then, for each $x\in X$, there exists a neighborhood $U\subset X$ of $x$ for which $f|_U \colon U \to fU$ is a proper branched cover. Moreover, for each domain $V$ compactly contained in $fU$ and each component $W\subset f^{-1}V\cap U$, the restriction $f|_W \colon W\to V$ is a proper branched cover.}


\subsection{Topological dimension and cohomology}

In what follows, we call the covering dimension of a space simply as dimension. Recall that a \emph{space $X$ has covering dimension at most $n$} (denoted $\dim X \le n$) if each covering of $X$ has a refinement of local multiplicity at most $n+1$. Further, \emph{$X$ has dimension $n$} ($\dim X = n$) if $\dim X \le n$ and $X$ does not have covering dimension at most $n-1$. 
See e.g.\;Engelking \cite{Engelking-book} for comparisons with other definitions.

A closed set $E\subset X$ has \emph{local dimension at most $n$ at $x\in E$} if there exists a neighborhood $U\subset X$ of $x$ so that $\dim(U\cap E)\le n$. Similarly, $E$ has \emph{local dimension at least $n$} if for all neighborhoods $U\subset X$ of $x$ for which $\dim (U\cap E) \ge n$.

In the proof of Theorem \ref{thm:CH} we use the fact that the Alexander--Spanier (or equivalently \v{C}ech-cohomology) groups $H^k(X;\Z)$ are trivial for $k>n$ if $\dim X\le n$; see e.g.\;\cite[pp.94-95]{Engelking-book}. Note that a codimension $2$ manifold completion is a Cantor manifold in the sense of \cite[Definition 1.9.5]{Engelking-book}. 



\section{Proof of Theorem \ref{thm:CH_perfect}}
\label{sec:CH_perfect}

In this section we prove the following version of Theorem \ref{thm:CH_perfect}.

\begin{Thm}
\label{thm:CH_perfect_B}
Let $X$ be a codimension $2$ manifold completion, $M$ an $n$-manifold, $f\colon X\to M$ a proper branched cover and $y \in fB_f.$  If the local dimension of $fB_f$ at $y\in fB_f$ is less than $n-2$, then $\Fix_f(y)$ is a finite perfect group. 
\end{Thm}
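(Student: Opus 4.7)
The plan is to realize a representative of $\Fix_f(y)$ as the deck group of a local normal branched cover over a small coordinate ball at $y$, to obtain it as a quotient of $\pi_1$ of the complement of the local branch image, and then to force the abelianization to vanish by Alexander duality combined with $\dim fB_f<n-2$.

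Concretely, I pick $\tilde y\in\bar f^{-1}(y)$ and let $H\le G_f$ be its stabilizer, so $[H]=\Fix_f(y)$. V\"ais\"al\"a's normal neighborhood lemma applied to $\bar f$ produces, for every sufficiently small coordinate ball $V\subset M$ around $y$, a connected component $U\subset\bar f^{-1}(V)$ containing $\tilde y$ on which $\bar f|_U\colon U\to V$ is a proper normal branched cover whose deck group is precisely $H$; in particular $H$ is finite. Writing $\bar f=f\circ p$ for the branched covering $p\colon X_f\to X$ and using that $p$ is an ordinary covering over $X\setminus f^{-1}fB_f$, the local branch image $E:=\bar f(B_{\bar f|_U})$ satisfies $E\subset V\cap fB_f$; shrinking $V$ via the hypothesis, I arrange $\dim E\le n-3$. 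Restricting to the regular part then gives a connected normal covering $U\setminus(\bar f|_U)^{-1}(E)\to V\setminus E$ with deck group $H$, and hence, by monodromy, a surjection $\pi_1(V\setminus E,y_0)\twoheadrightarrow H$ which abelianizes to
\[
H_1(V\setminus E;\Z)\twoheadrightarrow H^{\mathrm{ab}}.
\]

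It remains to show $H_1(V\setminus E;\Z)=0$. Identifying the coordinate ball $V$ with $\R^n$ and compactifying to $S^n$, the set $E\cup\{\infty\}$ is a compact subspace of $S^n$ with $\dim(E\cup\{\infty\})\le n-3$; the Alexander--Spanier relation between dimension and \v{C}ech cohomology (Section \ref{sec:pre}) gives $\check H^{n-2}(E\cup\{\infty\};\Z)=0$, and Alexander duality in $S^n$ yields
\[
H_1(V\setminus E;\Z)=\tilde H_1\bigl(S^n\setminus(E\cup\{\infty\});\Z\bigr)\cong\check H^{n-2}\bigl(E\cup\{\infty\};\Z\bigr)=0.
\]
Thus $H^{\mathrm{ab}}=0$ and the finite group $H$ is perfect. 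The main obstacle is this last step: since $E$ may be a genuinely wild set (for instance a Cantor set), no PL or smooth general position is available to push disks off $E$, and the \v{C}ech/Alexander--Spanier framework is exactly what makes the duality calculation go through. A secondary care-point is verifying that $\bar f|_U$ really has deck group $H$ (and not a proper subgroup or quotient) and that $\bar f(B_{\bar f|_U})\subset fB_f$; both are standard consequences of the Fox-completion theory and the structure of the monodromy triangle recalled in Section \ref{sec:pre}.
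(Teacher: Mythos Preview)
Your argument is correct and rests on the same core idea as the paper's: realize a representative of $\Fix_f(y)$ as the deck group of a local proper normal branched cover $\bar f|_U\colon U\to V$, pass to the regular part to get a surjection from $\pi_1$ of the complement of the branch image onto this group, and then kill the abelianization via Alexander duality together with the vanishing of $\check H^{n-2}$ forced by $\dim<n-2$.

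The packaging differs, though. The paper first isolates the abelian case as a separate statement (Theorem~\ref{thm:CH}) and proves it by \emph{doubling} a closed $n$-cell $\bar fE$ along its boundary to produce a branched cover $g\colon Z\to S^n$, on which the duality argument is run; the general case is then reduced to this by passing to the commutator quotient $W/[\Deck(h),\Deck(h)]\to V$, which has abelian deck group and is therefore forced to be a homeomorphism over a simply connected $V$. You instead take $V$ to be an open coordinate ball, one-point compactify to $S^n$, and argue directly that $H_1(V\setminus E;\Z)\twoheadrightarrow H^{\mathrm{ab}}$ vanishes. Your route is shorter and avoids both the doubling construction and the intermediate theorem; the paper's detour has the modest payoff that Theorem~\ref{thm:CH} is stated separately and reused verbatim in the proof of Theorem~\ref{thm:CHA_B}.

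One small technical point worth tightening: for Alexander duality you need $E\cup\{\infty\}$ compact in $S^n$, i.e.\ $E$ closed in $V\cong\R^n$. The set $\bar f(B_{\bar f|_U})$ is not obviously closed, so you should take $E=V\cap fB_f$ (closed since $f$ is proper); this only enlarges $E$, still has dimension $\le n-3$, and the covering over $V\setminus E$ is exactly the regular part in the sense of Section~\ref{sec:BC}, hence connected with deck group $H$.
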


We begin with the abelian case of the theorem.
\begin{Thm}
\label{thm:CH}
Let $X$ be a codimension $2$ manifold completion, $M$ an $n$-manifold, and $f\colon X\to M$ a proper branched cover for which $B_f\ne \emptyset$. The local dimension of $fB_f$ is $(n-2)$ at points of abelian local monodromy. 
\end{Thm}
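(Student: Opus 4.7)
The plan is to prove the contrapositive: assuming $\Fix_f(y)$ is abelian and that the local dimension of $fB_f$ at $y$ is strictly less than $n-2$, I will derive that $\Fix_f(y)$ is trivial, which is impossible when $y\in fB_f$. Combined with the Chernavskii--V\"ais\"al\"a upper bound $\dim fB_f\le n-2$, this yields the claimed equality. To localize, I pass to the normalization $\bar f\colon X_f\to M$ with finite deck group $G_f$ (finite because $f$ is proper), fix $\bar x\in\bar f^{-1}(y)$, and set $A:=\mathrm{Stab}_{G_f}(\bar x)$, a finite abelian representative of $\Fix_f(y)$. Using the finiteness of $G_f$ to separate $\bar x$ from its translates $g\bar x$ with $g\in G_f\setminus A$, together with V\"ais\"al\"a's normal neighborhood theorem, I choose an $A$-invariant connected open neighborhood $U$ of $\bar x$ and an open neighborhood $V$ of $y$ homeomorphic to an open $n$-ball, small enough that $\bar f|_U\colon U\to V$ is a proper normal branched cover with deck group $A$ and that $E:=V\cap fB_f$ satisfies $\dim E\le n-3$.

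Passing to regular parts, set $V':=V\setminus E$ and $U':=U\setminus\bar f^{-1}(E)$; then $\bar f|_{U'}\colon U'\to V'$ is a normal covering with abelian deck group $A$, and $U'$ is connected since $U$ is a codimension-$2$ manifold completion (hence a Cantor $n$-manifold, so removal of a set of dimension at most $n-2$ preserves connectedness). A connected normal cover with deck group $A$ corresponds to a surjection $\pi_1(V')\twoheadrightarrow A$, and since $A$ is abelian this factors through $H_1(V')$; by the universal coefficient theorem its nontriviality is detected by $H^1(V';A)\cong\mathrm{Hom}(H_1(V'),A)$. Thus it suffices to prove $H^1(V';A)=0$, as this will force $A$ to be trivial and deliver the contradiction.

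For the cohomological step I would use the long exact sequence of the pair $(V,V')$ in Alexander--Spanier (equivalently, \v{C}ech) cohomology. Contractibility of $V$ gives $H^i(V;A)=0$ for $i\ge 1$. For the relative groups, $V$ is an orientable $n$-manifold, so Poincar\'e--Lefschetz duality identifies $H^i(V,V';A)\cong H^{BM}_{n-i}(E;A)$, and the bound $\dim E\le n-3$ combined with the vanishing of \v{C}ech (co)homology above the covering dimension---the tool singled out in the preliminaries---gives $H^i(V,V';A)=0$ for $i\le 2$. The exact sequence then yields $H^1(V';A)=0$. The main obstacle is precisely this last translation of the covering-dimension bound on $fB_f$ into vanishing of local cohomology $H^i_E(V;A)$ in the manifold $V$; once that duality is cleanly in place, the rest of the argument combines the normal neighborhood theorem, the normalization $\bar f$, and the classification of abelian covers by $H^1$ in a standard way.
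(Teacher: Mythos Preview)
Your strategy matches the paper's: localize to a normal branched cover with abelian deck group over a simply connected target, use a duality argument together with the dimension bound $\dim(fB_f)<n-2$ to kill the first homology of the complement of the branch image, and conclude that the abelian deck group is trivial, contradicting $y\in fB_f$.

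The one genuine difference is how the duality step is arranged. You work directly in an open $n$-ball $V$ and appeal to the long exact sequence of $(V,V')$ together with Poincar\'e--Lefschetz duality $H^i(V,V';A)\cong H^{BM}_{n-i}(E;A)$ and the vanishing of Borel--Moore homology above the covering dimension. The paper instead performs a \emph{doubling construction}: it restricts $\bar f$ to a compact piece $E$ with $\bar fE$ an $n$-cell, glues two copies along the boundary to obtain a normal branched cover $g\colon Z\to S^n$ with the same abelian deck group, and then invokes classical Alexander duality $H_1(S^n\setminus gB_g)\cong \check H^{n-2}(gB_g;\Z)=0$. The doubling buys compactness of the branch image inside a closed manifold, so only the elementary fact ``\v{C}ech cohomology vanishes above the covering dimension'' is needed, exactly the tool isolated in the preliminaries; your route is more direct but requires the heavier local duality statement you flag as the main obstacle. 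Both are valid, and once either is in place the conclusion $\Fix_f(y)=0$ follows identically.
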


By Chernavskii-V\"ais\"al\"a theorem, the local dimension of $fB_f$ is at most $n-2$. Thus we may assume in this section that $n\ge 3$ and show that the dimension of $fB_f$ is at least $n-2$ at points of abelian local monodromy.  

\begin{Not} 
There are simple examples of branched covers having points where the local monodromy is abelian and not cyclic. For example, let $f\colon \R^3 \to \R^3$ be the composition $f=f_1 \circ f_2$, where $f_1 \colon \R^3 \ra \R^3$ is a $2$-to-$1$ winding map around the $x$-axis and $f_2 \colon \R^3 \ra \R^3$ is a $2$-to-$1$ winding map around the $y$-axis. 
Then $f$ is a proper $4$-to-$1$ normal branched cover for which the local monodromy at the origin is the abelian, but non-cyclic group, $\Z_2 \times \Z_2$. At every other point of $fB_f$ the local monodromy group is the cyclic group $\Z_2$.
\end{Not}

\begin{proof}[Proof of Theorem \ref{thm:CH}]
Suppose there exists $y\in fB_f$ for which the local dimension of $fB_f$ at $y$ is less than $n-2$. Let $\bar f \colon X_f \to M$ be a normalization of $f$ and $x\in \bar f^{-1}(y)$. We show first that there exists a neighborhood $U\subset X$ of $x$, having closure $E=\overline{U}$, so that $\bar f E$ is an $n$-cell in $M$, and $\bar f|_E \colon E \to \bar f E$ is a proper branched cover and $\dim (\bar f|_EB_{\bar f|_E}) < n-2$. 


Let $W$ be a neighborhood of $y$ in $M$ contained in an $n$-cell and for which $\dim (W\cap fB_f) < n-2$. Then, by \cite[Lemma 5.1]{Vaisala}, we may fix a neighborhood $V\subset X$ of $x$ so that $fV \subset W$ and $f|_V \colon V \to fV$ is a proper map. Let $D\subset \interior (fV \setminus f(\partial V))$ be an open $n$-cell so that $\overline{D}$ is an $n$-cell. Then $f|_{\overline{U}} \colon \overline{U} \to \overline{D}$, where $U$ is the $x$-component of $f^{-1}D$, is a proper branched cover.

We construct now a double of $\bar f|_E \colon E \to \bar fE$ as follows. Let $Z$ be the quotient space obtained by gluing two copies of $E$ together along the boundary. More precisely, let $Z = \left(E\times \{1,2\}\right)/{\sim}$, where $\sim$ is the minimal equivalence relation satisfying $(x,1) \sim (x,2)$ for $x\in \partial E$. Let $q_E\colon E\times \{1,2\}\to Z$ be the quotient map $(x,i) \mapsto [(x,i)]$. The non-manifold points of $Z$ are contained in the set $q_E((B_{\bar f} \cap E)\times \{1,2\})$ and $Z$ is a codimension $2$ manifold completion. 

Let also $S = \left( (\bar fE\times \{1\}) \cup (\bar fE\times \{2\})\right)/{\sim}$ be an $n$-sphere obtained by gluing the $n$-cells $\bar fE\times \{i\}$ together along the boundary $\partial \bar fE$ similarly as above. Let $q \colon \bar fE\times \{1,2\} \to S$ be the associated quotient map.

Let $g \colon Z\to S$ be the unique map for which the diagram 
\[
\xymatrix{
E\times \{1,2\} \ar[d]_{q_U} \ar[r]^{\bar f|_E \times \id} & \bar fE\times \{1,2\} \ar[d]^{q} \\
Z \ar[r]_{g} & S }
\]
commutes. Then $g$ is an open and discrete map. Indeed, discreteness of $g$ follows immediately. For the openness of $g$, it suffices to observe that, given a neighborhood $V$ of a point in $q_E(\partial E \times \{1\})$, there exists open sets $V_1$ and $V_2$ in $E$ so that $q_E( (V_1 \times \{1\}) \cup (V_2 \times \{2\}) ) = V$. Then 
\[
gV = q( (\bar fV_1 \times \{1\}) \cup (\bar fV_2 \times \{2\}))
\]
is an open set in $S$. Thus $g$ is a branched cover. Similarly we observe that $g$ is, in fact, a normal branched cover having an abelian deck group $\Deck(g) \cong \Deck(\bar f|_E)$ and $g B_{g} \subset q( \bar f|_E B_{\bar f|_E} \times \{1,2\})$.

Since $\dim(g B_{g}) < n-2$ and $n\ge 3$, we have, by the Alexander duality (see e.g.\;\cite[Theorem 6.6]{Massey-book}), that  
\[
H_1(S^n\setminus gB_{g}) \cong H^{n-2}(gB_{g};\Z) = 0.
\]
Thus $\pi_1(S^n\setminus gB_{g},z_0)$ is a perfect group for every $z_0 \in S^n\setminus gB_{g}.$

Let  
\[
g':=g|Z \setminus g^{-1}(gB_g)\colon Z\setminus g^{-1}(gB_g) \to S^n \setminus gB_g
\]
be a restriction of $g$ and let $\varphi_{g',y_0} \colon \pi_1(S^n\setminus gB_{g},z_0) \to \mathrm{Deck}(g')$ be the deck-homomorphism for points $z_0 \in S^n\setminus gB_{g}$ and $y_0 \in g'^{-1}\{z_0\}.$ Then $\mathrm{Deck}(g') \cong \mathrm{Deck}(g)$ is abelian. Thus $\mathrm{Deck}(g')$ is an abelian image of a perfect group, and hence trivial. We conclude that then $\mathrm{Deck}(g)$ is also trivial and the normal branched cover $g$ is a homeomorphism. Hence also $\bar f|_E$ is a homeomorphism and $\Fix_{f}(y)$ is trivial. This is a contradiction, since $y\in fB_f \cap U$. Hence the local dimension of $fB_f$ at each of its points is at least $n-2$. 
\end{proof}

\begin{proof}[Proof of Theorem \ref{thm:CH_perfect_B}] 
Suppose the local dimension of $fB_f$ at $y\in fB_f$ is less than $n-2$. Let $G_f$ be the monodromy group of $f$ and let
\[
\xymatrix{
& X_f \ar@(r,u)_{G_f} \ar[dl]_q \ar[dr]^{\bar f} & \\
X \ar[rr]^f & & M }
\]
be the monodromy triangle of $f$, where $\bar f$ is the normalization of $f$. We need to show that $\Fix_{\overline{f}}(z)=[\Fix_{\overline{f}}(z),\Fix_{\overline{f}}(z)]$ for a point $z \in \bar f ^{-1}(y).$

Let $V_0$ be such a neighborhood of $y$ that the dimension of $V_0 \cap fB_f$ is less than $n-2$ and let $z\in \bar f^{-1}(y)$. By V\"ais\"al\"a's lemma \cite[Lemma 5.1]{Vaisala}, we may fix a neighborhood $W$ of $z$ for which $\bar fW \subset V_0$ is simply connected, $\bar f^{-1}(\bar f(z)) = \{z\}$, and that the restriction $\bar f|_W \colon W\to \bar fW$ is a proper branched cover. We denote $U=qW$ and $V=\bar fW$. Then $g:=f|_U \colon U \to V$ is a proper branched cover and we have the diagram
\[
\xymatrix{
& W \ar[dl]_{q|_W} \ar[dr]^{\bar f|_W}& \\
U \ar[rr]^g & & V}
\]
where $q \r W$ and $\bar f \r W$ are normal branched coverings and $\Fix_{\bar f}(z) \cong \Deck(\bar f|_W).$

Let $h = \bar f|_W \colon W\to V$, and denote $N = [\Deck(h),\Deck(h)]$. We factor the normal branched cover $h$ as
\[
\xymatrix{
W \ar[dr]^{h} \ar[d]_p & \\
W/N \ar@{-->}[r]_{h_N} & V}
\]
where $p\colon W \to W/N$ is the quotient map $x\mapsto Nx$. Since $N\subset \Deck(h)$ is normal, $h_N \colon W/N \ra V$ is a normal branched covering and $\Deck(h_N) \cong \Deck(h)/N$. Hence $\Deck(h_N)$ is abelian. Since $(h_N)B_{h_N} \subset fB_f$ and the dimension of $V \cap fB_f$ is less than $n-2,$ we have by Theorem \ref{thm:CH} that $h_N$ is a covering map. Since $V$ is simply-connected, $h_N$ is a homeomorphism. Thus $\Deck(\bar f|_W) = N$.
\end{proof}


\section{Proof of Theorem \ref{thm:CHA}}
\label{sec:CHA}

We prove now a version of Theorem \ref{thm:CHA} for branched covers having a codimension $2$ manifold completion as a domain. The statement reads as follows.

\begin{Thm}
\label{thm:CHA_B}
Let $f\colon X \to M$ be a proper branched cover from a codimension $2$ manifold completion $X$ to an $n$-manifold $M$ so that the local multiplicity of $f$ is at most $3$ in $B_f$. Then either $f$ is a covering or $fB_f$ has local dimension $n-2$. 
\end{Thm}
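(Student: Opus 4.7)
The plan is to apply Theorem \ref{thm:CH_perfect_B} and convert the local multiplicity bound into a group-theoretic statement that forces the local monodromy at $y$ to be trivial. Suppose for contradiction that $f$ is not a covering and that, at some $y \in fB_f$, the local dimension of $fB_f$ is strictly less than $n-2$; together with the Chernavskii--V\"ais\"al\"a upper bound, ruling out this possibility proves the theorem. By Theorem \ref{thm:CH_perfect_B}, the local monodromy $\Fix_f(y)$ is then a finite perfect group.

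The key observation is elementary: \emph{every action of a finite perfect group on a set of cardinality at most three is trivial.} Indeed, the image in $\Sym_k$ for $k \le 3$ is a perfect subgroup of $\Sym_k$; since $[\Sym_3,\Sym_3] = \mathrm{Alt}_3 \cong \Z_3$ is abelian and $\Sym_1,\Sym_2$ are abelian, the only perfect subgroup in each case is trivial. I would combine this with the monodromy-triangle bookkeeping as follows. Write $H \subset G_f$ for the subgroup with $X \cong X_f/H$, so that $q \colon X_f \to X$ is the corresponding normal branched cover with deck group $H$. For any $x \in f^{-1}(y)$, pick $z \in q^{-1}(x) \subset \bar f^{-1}(y)$. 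Multiplicativity of local indices along $\bar f = f \circ q$, together with the standard identification of local degrees of normal branched covers with orders of isotropy groups, gives
\[
i(x, f) \;=\; \frac{|\mathrm{Stab}_{G_f}(z)|}{|\mathrm{Stab}_{G_f}(z) \cap H|}.
\]
Since $G_f$ acts transitively on $\bar f^{-1}(y)$, the group $\mathrm{Stab}_{G_f}(z)$ is conjugate to a representative of $\Fix_f(y)$ and hence also finite and perfect. Its natural transitive left action on the coset space $\mathrm{Stab}_{G_f}(z)/(\mathrm{Stab}_{G_f}(z) \cap H)$ has $i(x,f) \le 3$ elements, and so by the observation it is trivial. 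This forces $\mathrm{Stab}_{G_f}(z) \subset H$, whence $i(x,f) = 1$.

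Since this holds for every $x \in f^{-1}(y)$, no point of $f^{-1}(y)$ is a branch point of $f$, contradicting the assumption $y \in fB_f$. The main obstacle is the identification of the local multiplicity of the possibly non-normal cover $f$ with the index $[\mathrm{Stab}_{G_f}(z) : \mathrm{Stab}_{G_f}(z) \cap H]$, which must be extracted from the monodromy triangle via multiplicativity of local degrees and the behavior of normal branched covers at branch points; the small-set triviality observation is then purely mechanical. Once this identification is in place, the combination of Theorem \ref{thm:CH_perfect_B} (supplying perfectness) with the triviality of small perfect actions delivers the contradiction uniformly at every preimage of $y$.
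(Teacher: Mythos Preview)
Your argument is correct and takes a genuinely different route from the paper. The paper proceeds by a case analysis on the local multiplicity: when it is $2$ the local monodromy is $\Z_2$ and Theorem \ref{thm:CH} (the abelian case) applies directly; when it is $3$ they localize to a proper restriction $g$ whose monodromy group $G_g$ is a transitive subgroup of $S_3$, split into $G_g\cong\Z_3$ (abelian, done) and $G_g\cong S_3$, and in the latter case pass to the intermediate degree-$2$ branched cover $p\colon U_g\to U$, observe that $\Fix_p(x)\cong\Z_2$ is abelian, apply Theorem \ref{thm:CH} to $p$, and then push $pB_p$ forward via $f(pB_p)\subset fB_f$ together with the Church--Hemmingsen dimension equality.

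Your approach instead invokes Theorem \ref{thm:CH_perfect_B} once to make $\mathrm{Stab}_{G_f}(z)$ perfect, and then uses the purely group-theoretic fact that $S_k$ has no nontrivial perfect subgroup for $k\le 3$ to kill the action of $\mathrm{Stab}_{G_f}(z)$ on the coset space $\mathrm{Stab}_{G_f}(z)/(\mathrm{Stab}_{G_f}(z)\cap H)$. The index identification $i(x,f)=[\mathrm{Stab}_{G_f}(z):\mathrm{Stab}_{G_f}(z)\cap H]$ that you flag is indeed available here: $q$ is the orbit map of the subgroup $H=f'_*\pi_1(X',x_0)/\ker\mu_{f'}\subset G_f$ and hence a normal branched cover with $\Deck(q)\cong H$, so $i(z,q)=|\mathrm{Stab}_H(z)|$ and $i(z,\bar f)=|\mathrm{Stab}_{G_f}(z)|$, and multiplicativity along $\bar f=f\circ q$ gives the formula. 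This is cleaner than the paper's argument: no case split, no passage to an auxiliary intermediate cover, and no appeal to the Church--Hemmingsen dimension result. It also makes transparent that the same proof works verbatim for local multiplicity at most $4$, since $S_4$ still has no nontrivial perfect subgroup; the bound $3$ in the statement is not sharp for your method, whereas the paper's case-by-case argument would require a further analysis to extend.
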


\begin{proof}
Suppose that $f$ is not a covering map. We show that the local dimension of $fB_f$ is $n-2$ at each point of $fB_f$. By the Chernavskii--V\"ais\"al\"a theorem, the local dimension of $fB_f$ is at most $n-2$ at each point $y\in fB_f$. Thus it suffices to show that the local dimension of $fB_f$ at $f(x)$ is at least $n-2$. 

Suppose first that there exists $x\in B_f$ for which the local multiplicity of $f$ at $x$ is $2$. Then $\Fix_f(f(x)) \cong \Z_2$ and, by Theorem \ref{thm:CH}, the local dimension of $fB_f$ is $n-2$.

Suppose now that the local multiplicity of $f$ at $x\in B_f$ is $3$. Let $G_f$ be the monodromy group of $f$ and let
\[
\xymatrix{
& X_f \ar@(r,u)_{G_f} \ar[dl]_q \ar[dr]^{\bar f} & \\
X \ar[rr]^f & & M }
\]
be the monodromy triangle of $f$, where $\bar f$ is the normalization of $f$. 

Let $U_0$ be a neighborhood of $x$ so that $f|_{U_0} \colon U_0\to M$ has multiplicity at most $3$. Let now $z\in q^{-1}(x)$. As in the proof of Theorem \ref{thm:CH_perfect} we fix, using V\"ais\"al\"a's lemma \cite[Lemma 5.1]{Vaisala}, a neighborhood $W$ of $z$ so that $\bar f|_W \colon W\to \bar fW$ is a proper branched cover, $\bar f\r W ^{-1}(\bar f(z)) = \{z\}$, and $qW \subset U_0$. We denote (again) $U=qW$ and $V=\bar fW$. Then $g=f|_U \colon U \to V$ is a proper branched cover with multiplicity $3$ and we have the diagram
\[
\xymatrix{
& W \ar[dl]_{q|_W} \ar[dr]^{\bar f|_W}& \\
U \ar[rr]^g & & V}
\]
where $q \r W$ and $\bar f \r W$ are normal branched coverings.

From the monodromy triangle of $g$, we obtain the diagram
\[
\xymatrix{
& U_g \ar@(r,ur)_{G_g} \ar@(l,ul)^{G_p}\ar[dl]_p \ar[dr]^{\bar g}& \\
U \ar[rr]^g & & V}
\]
where $\bar g$ is the normalization of $g$, $G_g$ is the monodromy group of $g$, and $G_p \subset G_g$ is the monodromy group of $p$. 

Further, by minimality of the monodromy factorization \cite[Section 2.2]{Aaltonen}, there exists a branched covering $r\colon W \to U_g$ and a commutative diagram
\[
\xymatrix{
& W \ar[d]^r \ar@/_/[ddl]_{q|_W} \ar@/^/[ddr]^{\bar f|_W} & \\
& U_g \ar[dl]_p \ar[dr]^{\bar g}& \\
U \ar[rr]^g & & V}
\]

Since $g$ has multiplicity $3$, the monodromy group $G_g$ is isomorphic to a subgroup of the symmetric group $S_3$. Since $G_g$ acts transitively on $\bar g^{-1}(y_0)$, we have $|G_g|\ge 3$. Thus either $|G_g|=3$ or $|G_g|=6$, i.e.\;$G_g\cong \Z_3$ or $G_g\cong S_3$.

Suppose $G_g\cong \Z_3$. Then the local monodromy of $\bar g$ at $f(x)$ is abelian and, by Theorem \ref{thm:CH}, $\bar gB_{\bar g}$ has dimension $n-2$ at $f(x)$. Since $\bar gB_{\bar g} \subset fB_f$, we have that $fB_f$ has local dimension at $f(x)$ at least $n-2$.

Suppose now that $G_g \cong S_3$. We show that $x\in pB_p$ and that the local monodromy of $p$ at $x$ is abelian. Then, by Theorem \ref{thm:CH}, $pB_p$ has local dimension at least $n-2$ at $x$.

Since $\bar g$ has multiplicity $6$ and $g$ has multiplicity $3$, the branched cover $p$ has multiplicity $2$. Thus $G_p\cong \Z_2$. Moreover, $\Fix_p(x)\cong \Z_2$ and, in particular, the local monodromy of $p$ at $x\in pB_p$ is abelian. Indeed, since $q^{-1}(x)\cap W = \{z\}$, we have that $p^{-1}(x) = \{r(z)\}$. Thus $G_p$ fixes $r(z)$. 

Since $pB_p$ has has local dimension at least $n-2$ at $x$, we have, by the Church--Hemmingsen theorem \cite[Corollary 2.3]{Church-Hemmingsen}, that $f(pB_p)$ has local dimension at least $n-2$ at $f(x)$. Since $f(pB_p) \subset fB_f$, the proof is complete.
\end{proof}


\section{Application to toroidal Cantor sets}
\label{sec:applications}

We finish an application of Theorem \ref{thm:CH} to branched covers branching over toroidal Cantors sets. A Cantor set $C$ in a $3$-manifold $M$ is \emph{toroidal} if every (finite) covering $\{U_i\}_{i\ge 1}$ of $C$ has a (finite) refinement $\{T_j\}_{j\ge 1}$ so that each domain $T_j$ has the $2$-torus as a boundary, i.e.\;$\partial T_j \approx S^1\times S^1$, and $C\cap \partial T_j = \emptyset$ for each $j\ge 1$.

We begin with a proposition; we define that the \emph{boundary $\partial X$ of a codimension $2$ manifold completion $X$} is the boundary $\partial X^o$ of the regular part $X^o$ of $X$.

\begin{Prop}
\label{prop:CH_cor}
Let $X$ be a compact codimension $2$ manifold completion having connected boundary $\partial X$, $T$ a compact $3$-manifold with boundary $\partial T \approx S^1\times S^1$, and $f\colon X\to T$ a normal branched covering. If $f|_{\partial X} \colon \partial X\to \partial T$ is a covering, then $\dim fB_f =1$.
\end{Prop}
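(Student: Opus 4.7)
The plan is to reduce the proposition to the abelian case already handled in Theorem \ref{thm:CH}. The key claim is that the deck group $\Deck(f) = G_f$ is itself abelian; once this is established, every local monodromy $\Fix_f(y)$ for $y\in fB_f$ is a subgroup of an abelian group and hence abelian, so Theorem \ref{thm:CH} directly gives that the local dimension of $fB_f$ at $y$ equals $n-2 = 1$. Combined with the Chernavskii--V\"ais\"al\"a upper bound this yields $\dim fB_f = 1$.

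Before attacking the abelianness of $G_f$, I would set up three auxiliary facts. First, the hypothesis that $f|_{\partial X}$ is a covering means $B_f \cap \partial X = \emptyset$. Second, since $G_f$ acts on $X$ by homeomorphisms it preserves the topological manifold boundary $\partial X$, and since $G_f$ acts transitively on every fiber of $f$, no fiber can contain both a branch point and a boundary point; consequently $fB_f \subset \interior T$. Third, using openness of $f$ one checks that interior points of $X^o$ map into $\interior T$, which combined with the preceding bullet gives $\partial X = f^{-1}(\partial T)$. By Chernavskii--V\"ais\"al\"a, $fB_f$ has dimension at most $n-2 = 1$, so the closed set $f^{-1}(fB_f)$ has dimension at most $1$ and does not disconnect the $3$-dimensional codimension $2$ manifold completion $X$; in particular the regular part $X' := X \setminus f^{-1}(fB_f)$ is open, connected, dense in $X$, and contains $\partial X$ as its manifold boundary.

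The central step is to show that the restriction homomorphism
\[
\Phi \colon G_f \to \Deck(f|_{\partial X}), \qquad g \mapsto g|_{\partial X},
\]
is an isomorphism. For surjectivity, the image $\Phi(G_f)$ already acts transitively on each fiber of $f|_{\partial X}$ because $G_f$ does so on each fiber of $f$; since the deck group of a normal covering acts simply transitively on fibers, any transitive subgroup must exhaust it, so $\Phi(G_f) = \Deck(f|_{\partial X})$, and this simultaneously shows that $f|_{\partial X}$ is a normal covering. For injectivity, any $g \in \ker \Phi$ fixes $\partial X$ pointwise; its restriction to $X'$ is then a deck transformation of the normal covering $f|_{X'} \colon X' \to T \setminus fB_f$ having a fixed point. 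Since $X'$ is connected, $g|_{X'} = \id$, and by density of $X'$ in $X$ we conclude $g = \id$.

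Finally, because $\partial X$ is a connected normal cover of the torus $\partial T \approx S^1 \times S^1$, the group $\Deck(f|_{\partial X})$ is a quotient of $\pi_1(\partial T) \cong \Z^2$, hence abelian. Via the isomorphism $\Phi$, so is $G_f$, completing the reduction to Theorem \ref{thm:CH}. The main obstacle in this plan is the injectivity of $\Phi$: it rests on the connectedness of the regular part $X'$, which in turn relies on the codimension bound from the Chernavskii--V\"ais\"al\"a theorem, and on the standard fact that deck transformations of a normal covering of a connected space act freely.
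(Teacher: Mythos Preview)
Your argument is correct and follows essentially the same route as the paper's proof: restrict $\Deck(f)$ to the boundary, observe that this restriction homomorphism is injective, use that any covering of the torus has abelian deck group, and then invoke Theorem \ref{thm:CH}. The only difference is that you also prove surjectivity of $\Phi$, which is unnecessary --- injectivity alone already shows $\Deck(f)$ embeds in an abelian group and hence is abelian.
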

\begin{proof}
Clearly the restriction $f|_{\partial X} \colon \partial X\to \partial T$ is a normal covering. The homomorphism $\Deck(f) \mapsto \Deck(f|_{\partial X})$, $g \mapsto g|_{\partial X}$, is well-defined and injective. Indeed, since $f$ is open, $f^{-1}(\partial T) = \partial X$ and $g|_{\partial X}\colon \partial X\to \partial X$ is well-defined and belongs to the deck group of $f|_{\partial X}$. 

To show that the restriction $g\mapsto g|_{\partial X}$ is injective, let $g$ and $h$ be deck transformations of $f$ satisfying $g|_{\partial X} = h|_{\partial X}$. By uniqueness of deck transformations of covering maps, we obtain $g|_{X \setminus f^{-1}fB_f} = h|_{X \setminus f^{-1}fB_f}$. Thus $g=h$ by the density of $X\setminus f^{-1}fB_f$ in $X$.
 
Since $\partial X$ is connected and $f|_{\partial X}\colon \partial X\to \partial T$ is a covering, we conclude that $\partial X$ is a $2$-torus and $f|_{\partial X}$ is a normal covering. In particular, $\Deck(f|_{\partial X})$ is abelian. Thus $\Deck(f)$ is abelian and the claim follows from Theorem \ref{thm:CH}.
\end{proof}

As a consequence we obtain that branched covers do not branch over toroidal Cantor sets. We formulate this as follows. Theorem \ref{thm:Antoine} for Antoine's necklaces is a particular case of this corollary.

\begin{Cor}
\label{cor:CH2}
Let $M$ and $N$ be $3$-manifolds and $f\colon M\to N$ a locally normal branched cover so that $fB_f$ is contained in a toroidal Cantor set. Then $f$ is a covering map. 
\end{Cor}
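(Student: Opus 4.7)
I would prove the corollary by contradiction, reducing it to Proposition~\ref{prop:CH_cor}. Suppose $f$ is not a covering map, so $B_f\ne \es$; pick $x\in B_f$ and set $y=f(x)\in fB_f\subset C$, where $C$ is the toroidal Cantor set containing $fB_f$.

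First, I would localize. Using local normality of $f$ at $x$ together with V\"ais\"al\"a's normal neighborhood lemma, fix a neighborhood $U$ of $x$ with compact closure so that $f|_{\overline{U}}\colon \overline{U}\to f\overline{U}$ is a proper normal branched cover, and shrink $U$ further so that $f^{-1}(y)\cap \overline{U} = \{x\}$.

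Second, I would extract a torus-boundary domain around $y$ inside $fU$. Cover $C$ by $fU$ together with finitely many open sets $V_1,\dots,V_m$ whose closures are disjoint from a chosen compact neighborhood of $y$ contained in $fU$, so that among the members of this cover only $fU$ contains $y$. By toroidality, this cover admits a refinement $\{T_j\}$ whose members satisfy $\partial T_j\approx S^1\times S^1$ and $\partial T_j\cap C=\es$. The refinement member $T = T_{j_0}$ containing $y$ then lies inside $fU$, and $\partial T\cap fB_f=\es$.

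Third, I would apply Proposition~\ref{prop:CH_cor}. Let $W$ be the component of $f^{-1}(T)\cap U$ containing $x$. By V\"ais\"al\"a's lemma, $f|_{\overline{W}}\colon \overline{W}\to \overline{T}$ is a proper branched cover and $\overline{W}$ is a compact codimension $2$ manifold completion; moreover, the restriction of a normal branched cover to one of its preimage components is again a normal branched cover, so $f|_W$ is normal with deck group the stabilizer of $W$ inside $\Deck(f|_U)$. Because $\partial T$ is disjoint from $fB_f$, the restriction $f|_{\partial W}\colon \partial W\to \partial T$ is an unbranched covering of the $2$-torus. Proposition~\ref{prop:CH_cor} then yields $\dim (f|_W)B_{f|_W} = 1$, contradicting $(f|_W)B_{f|_W}\subset fB_f\subset C$, which has topological dimension $0$.

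The main obstacle I anticipate is the connectedness of $\partial W$ that Proposition~\ref{prop:CH_cor} requires. Because the toroidal refinements can be chosen arbitrarily fine and $x$ is the unique preimage of $y$ in $\overline{U}$, this should hold after sufficient refinement. Should disconnectedness persist, one could instead pick a single component $K\subset \partial W$, observe that its stabilizer in $\Deck(f|_W)$ embeds into the abelian group $\Deck(K\to \partial T)\subset \Z^2$, and pass to an intermediate quotient of $f|_W$ with abelian deck group, invoking Theorem~\ref{thm:CH} in place of Proposition~\ref{prop:CH_cor} to reach the same contradiction.
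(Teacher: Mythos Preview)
Your reduction to Proposition~\ref{prop:CH_cor} matches the paper's strategy exactly, and the localization steps you outline are fine. The genuine gap is precisely the one you flag: the connectedness of $\partial W$. Neither of your two patches is adequate.

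For patch (a), ``this should hold after sufficient refinement'' is not an argument. Smallness of $T$ by itself does not force $\partial W$ to be connected; one must use the ambient topology of $M$. The paper's proof supplies the missing step: one first arranges that the $x$-component $H_x$ of $f^{-1}T$ lies in the interior of a $3$-cell $C\subset M$ and that $f^{-1}(N\setminus T)$ (hence $M\setminus H_x$) is connected. Then Alexander duality in $C$ shows that each component of $\partial H_x$ separates $C$, and together with the connectedness of $M\setminus H_x$ this forces $\partial H_x$ to have a single component. This topological input is the heart of the corollary and cannot be waved away.

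For patch (b), the stabilizer $\mathrm{Stab}(K)$ of a boundary component $K$ can be trivial. Since you arranged $(f|_W)^{-1}(y)=\{x\}$, the full deck group $\Deck(f|_W)$ fixes $x$ and acts transitively on the components of $\partial W$; if it permutes them freely---which happens exactly when each component maps homeomorphically onto $\partial T$---then $\mathrm{Stab}(K)=\{1\}$, the intermediate map $W\to W/\mathrm{Stab}(K)$ is a homeomorphism, and Theorem~\ref{thm:CH} gives nothing. (Incidentally, $\Deck(K\to\partial T)$ is a \emph{quotient} of $\Z^2$, not a subgroup; the abelianness is correct, but the inclusion you wrote is not.) Without an argument excluding this free action---which again comes down to knowing that $\partial W$ is connected---the fallback fails.

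So the proposal is correct in outline but incomplete at exactly the point the paper works hardest: you should add the $3$-cell/Alexander-duality argument for the connectedness of $\partial W$, after which Proposition~\ref{prop:CH_cor} applies directly and the alternative (b) becomes unnecessary.
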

\begin{proof}
Let $y \in fB_f$, $x\in f^{-1}(y)$, and $T\subset N$ a neighborhood of $y$ so that $\partial T$ is a $2$-torus in $N \setminus fB_f$, $f^{-1}(N\setminus T)$ is connected, the $x$-component $H_x$ of $f^{-1}T$ is contained in an interior of an $n$-cell $C$ in $M$ and $f|_{H_x} \colon H_x \to T$ is a normal branched covering. Note that, e.g.\;by the Alexander duality, each boundary component of $\partial H_x$ separates $C$ into exactly two connected components. Since $M\setminus H_x \supset f^{-1}(N\setminus T)$ is connected, we conclude that the boundary $\partial H_x$ is connected. On the other hand, since $f|_{\partial H_x} \colon \partial H_x \to \partial T$ is a covering, we have that $\partial H_x$ is a $2$-torus.
Thus the mapping $f|_{H_x}\colon H_x\to T$ satisfies the conditions of Proposition \ref{prop:CH_cor}.
\end{proof}


\def\cprime{$'$}

\end{document}